\numberwithin{equation}{section}
\newtheorem{theorem}{Theorem}
\newtheorem{lemma}{Lemma}
\newtheorem{conjecture}{Conjecture}
\newtheorem{proposition}{Proposition}
\theoremstyle{remark}
\renewcommand{\Re}{\operatorname{Re}}
\newcommand\PSL{\operatorname{PSL}}
\newcommand\SL{\operatorname{SL}}
\newcommand\id{\operatorname{I}}
\begin{document}

\hypersetup{pdfauthor={Matilde Lal\'in, Francis Rodrigue, Mathew Rogers},%
pdftitle={Secant Zeta Function}}

\title{Secant Zeta Functions}

\author{Matilde Lal\'in}
\address{Department of Mathematics and Statistics, University of Montreal, Montreal, Canada}
\email{mlalin@dms.umontreal.ca}

\author{Francis Rodrigue}
\address{Department of Mathematics and Statistics, University of Montreal, Montreal, Canada}
\email{rodriguefrancis@gmail.com}

\author{Mathew Rogers}
\address{Department of Mathematics and Statistics, University of Montreal, Montreal, Canada}
\email{mathewrogers@gmail.com}

\thanks{This work has been partially supported by NSERC Discovery Grant 355412-2008 and FQRNT Subvention \'etablissement
de nouveaux chercheurs 144987. The work of FR has also been supported by a Bourse d'\'et\'e de premier cycle du ISM-CRM}

\date{April 5, 2013}

\subjclass[2010]{Primary 33E20; Secondary 33B30, 11L03}
\keywords{Secant zeta function, Bernoulli Numbers, Clausen functions, Riemann zeta function}

\begin{abstract}
We study the series $\psi_s(z):=\sum_{n=1}^{\infty} \sec(n\pi z)n^{-s}$, and prove that it converges under mild restrictions on $z$ and $s$. The function possesses a modular transformation property, which allows us to evaluate $\psi_{s}(z)$ explicitly at certain quadratic irrational values of $z$.  This supports our conjecture that $\pi^{-k} \psi_{k}(\sqrt{j})\in\mathbb{Q}$ whenever $k$ and $j$ are positive integers with $k$ even.  We conclude with some speculations on Bernoulli numbers.
\end{abstract}

\maketitle
%==================================================
\section{Introduction}

Let $\zeta(s)$ denote the Riemann zeta function. It is well known that $\zeta(2n)\pi^{-2n}\in\mathbb{Q}$ for $n\ge 1$.  Dirichlet $L$-functions and Clausen functions are modified versions of the Riemann zeta function, which also have nice properties at integer points \cite{L}.  Berndt studied a third interesting modification of the Riemann zeta function, namely the cotangent zeta function \cite{B}:
\begin{equation}\label{cotangent zeta}
\xi_s (z):= \sum_{n=1}^{\infty} \frac{\cot(\pi n z)}{n^{s}}.
\end{equation}
He proved that \eqref{cotangent zeta} converges under mild restrictions on $z$ and $s$, and he produced many explicit formulas for $\xi_k(z)$, when $z$ is a quadratic irrational, and $k\ge3$ is an odd integer. Consider the following examples:
\begin{align*}
\xi_3\left(\frac{1+\sqrt{5}}{2}\right) = - \frac{ \pi^3}{45\sqrt{5}}, &&\xi_{5}(\sqrt{2})=\frac{\pi^5}{945\sqrt{2}}.
\end{align*}
Berndt's work implies that $\sqrt{j}~\xi_k(\sqrt{j})\pi^{-k}\in\mathbb{Q}$ whenever $j$ is a positive integer that is not a perfect square, and $k\ge3$ is odd.
A natural extension of that work is to replace $\cot(z)$ with one of the functions $\{\tan(z), ~\csc(z), ~\sec(z)\}$.  We can settle the tangent and cosecant cases via elementary trigonometric identities:
\begin{align*}
\sum_{n=1}^{\infty}\frac{\tan(\pi n z)}{n^s}=\xi_s(z)-2\xi_s(2z),&&
\sum_{n=1}^{\infty}\frac{\csc(\pi n z)}{n^s}=\xi_s(z/2)-\xi_s(z),
\end{align*}
but it is more challenging to understand the secant zeta function:
\begin{equation}\label{def}
\psi_{s}(z) := \sum_{n=1}^\infty  \frac{\sec(\pi n z)}{n^{s}}.
\end{equation}
The main goal of this  paper is to prove formulas for specials values of $\psi_s(z)$.  In Section \ref{Section:convergence} we prove that the sum converges absolutely if $z$ is an irrational algebraic number and $s\ge 2$.  %Assume that $k$ is a positive even integer, then we prove that $\psi_{k}(\sqrt{j})$ reduces to rational multiples of $\pi^{k}$ for infinitely values of $j\in\mathbb{N}$.
In Section \ref{special values} we obtain results such as
\begin{align*}
\psi_{2}(\sqrt{2})=-\frac{\pi^2}{3},&&\psi_{2}(\sqrt{6})=\frac{2\pi^2}{3}.
\end{align*}
These types of formulas exist because $\psi_{k}(z)$ obeys a modular transformation which we prove in Section \ref{modulartransf} (see equation \eqref{psi functional equation}).  Furthermore, based on numerical experiments, we conjecture:
\begin{conjecture}\label{conjecture on rationality}  Assume that $k$ and $j$ are positive integers, and that $k$ is even.  Then $\psi_{k}(\sqrt{j})\pi^{-k}\in\mathbb{Q}$.
\end{conjecture}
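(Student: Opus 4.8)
The plan is to turn the modular transformation \eqref{psi functional equation} into a self-referential identity for the single quantity $\psi_k(\sqrt j)$, in direct analogy with Berndt's treatment of the cotangent zeta function. The engine is the observation that $\sqrt j$ is a fixed point of a hyperbolic Möbius transformation with integer entries: if $(x,y)$ is a nontrivial solution of Pell's equation $x^2-jy^2=1$, then the matrix $\gamma=\left(\begin{smallmatrix} x & jy\\ y & x\end{smallmatrix}\right)$ has $\det\gamma=1$ and satisfies $\gamma\cdot\sqrt j=\sqrt j$. Because $\sqrt j$ is an irrational algebraic number, Section \ref{Section:convergence} guarantees that $\psi_k(\sqrt j)$ converges for every even $k\ge 2$, so all the terms below are well defined.

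First I would express (a power of) $\gamma$ as a word in the generators underlying \eqref{psi functional equation} --- the inversion, the period-two shift $z\mapsto z+2$, and the reflection $z\mapsto -z$, all of which are genuine symmetries of $\psi_k$ because $\sec$ is even and has period $2$ in $\pi n z$. Applying the transformation once for each letter and composing the relations along the cycle $\sqrt j=z_0\mapsto z_1\mapsto\cdots\mapsto z_m=\sqrt j$, the argument returns to $\sqrt j$, the automorphy factors multiply to a single power of the fundamental unit $\epsilon=x+y\sqrt j$, and the inhomogeneous contributions accumulate into one correction term. Since the polynomial correction terms in \eqref{psi functional equation} carry coefficients that are rational multiples of $\pi^k$ (arising from the Bernoulli numbers), after factoring out $\pi^k$ one arrives at a scalar identity
\begin{equation*}
\psi_k(\sqrt j)=A\,\psi_k(\sqrt j)+B\,\pi^k,
\end{equation*}
where $A$ is a power of $\epsilon$ (hence $A\ne 1$, as $\epsilon>1$ has infinite order and $k\ge 2$) and $B\in\mathbb{Q}(\sqrt j)$. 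Solving gives $\psi_k(\sqrt j)\,\pi^{-k}=B/(1-A)\in\mathbb{Q}(\sqrt j)$.

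It remains to upgrade membership in $\mathbb{Q}(\sqrt j)$ to membership in $\mathbb{Q}$, and this is where the parity of $k$ enters. Writing $\psi_k(\sqrt j)\,\pi^{-k}=p+q\sqrt j$ with $p,q\in\mathbb{Q}$ and applying the Galois automorphism $\sqrt j\mapsto-\sqrt j$ to the entire derivation --- legitimate because $\gamma$ and all the data in \eqref{psi functional equation} are defined over $\mathbb{Q}$ --- produces the conjugate evaluation $\psi_k(-\sqrt j)\,\pi^{-k}=p-q\sqrt j$. But $\sec$ is even, so $\psi_k(-\sqrt j)=\psi_k(\sqrt j)$, forcing $q=0$ and hence $\psi_k(\sqrt j)\,\pi^{-k}=p\in\mathbb{Q}$. (The same scheme applied to the cotangent zeta function, where $\cot$ is odd, instead forces $p=0$ and leaves the factor $\sqrt j$ visible, which reassuringly explains the change of parity between Berndt's problem and ours.) The main obstacle is everything hidden in the phrase ``accumulate into one correction term'': one must show, \emph{uniformly in $j$}, that some power of $\gamma$ actually lies in the group generated by the symmetries of \eqref{psi functional equation} --- a nontrivial congruence restriction, since those generators need not produce all of $\SL_2(\mathbb{Z})$ --- that every intermediate argument $z_i$ avoids the poles of $\sec$ and stays in the region where \eqref{psi functional equation} is valid, and that the accumulated correction term genuinely lies in $\pi^k\,\mathbb{Q}(\sqrt j)$ with the precise structure the Galois step requires. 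Controlling these cancellations for all $j$ at once, rather than case by case as in Section \ref{special values}, is exactly what keeps the statement at the level of a conjecture.
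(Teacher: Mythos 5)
The statement you were asked to prove is labelled a \emph{conjecture} in the paper, and the paper does not prove it; it only verifies it for special families of $j$ (for example $j=2m(2m+1)$ and $j=8m(2m+1)$) in Section \ref{special values}. Your outline is essentially the paper's own strategy for those partial results: take a Pell solution $x^2-jy^2=1$, form the matrix $\left(\begin{smallmatrix} x & jy\\ y & x\end{smallmatrix}\right)$ fixing $\sqrt j$, try to factor it into the available generators, compose the transformation law along the resulting word so that the automorphy factors multiply to a unit and the $\pi^k\mathbb{Q}(\sqrt j)$-corrections accumulate, solve the linear equation for $\psi_k(\sqrt j)$, and use invariance under $\sqrt j\mapsto-\sqrt j$ to conclude rationality. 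The paper even makes the same Galois observation about equations \eqref{gen 1} and \eqref{gen 2}.

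You have, to your credit, named the gap yourself, and it is the decisive one: the transformations actually available are $A\colon z\mapsto z+2$ (equation \eqref{psi period}) and $B\colon z\mapsto z/(2z+1)$ (equation \eqref{matrix B}), and $\langle A,B\rangle\subset\Gamma_0(2)$ is a proper subgroup of $\PSL_2(\mathbb{Z})$; it is not known that, for every $j$, some power of a Pell matrix for $\sqrt j$ can be written as a word in $A$ and $B$ in such a way that the correction terms can be controlled and all intermediate arguments stay where Theorem \ref{theorem 1} applies. The paper states explicitly that ``it is not clear if such a selection is always possible.'' One genuine inaccuracy in your write-up: you list ``the inversion'' among the symmetries underlying \eqref{psi functional equation}. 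There is no inversion $z\mapsto -1/z$ in the toolkit --- $\psi_k$ has no known transformation under it --- and this absence is precisely why the reachable matrices are confined to $\langle A,B\rangle$ and why the argument does not close for general $j$. So your proposal is a correct description of the known partial approach together with an honest statement of the open obstruction, not a proof; no proof exists in the paper to compare it against.
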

The results of Section \ref{special values} support this conjecture, even though there are still technical hurdles to constructing a complete proof.  For instance, we prove that the conjecture holds for infinite subsequences of natural numbers.  The rational numbers that appear are also interesting, and we speculate on their properties in the conclusion.

\section{Convergence}\label{Section:convergence}
Since $\sec(\pi z)$ has poles at the half-integers, it follows that $\psi_{s}(z)$ is only well-defined if $n z\not\in
\mathbb{Z}+\frac{1}{2}$ for any integer $n$.  Thus, we exclude rational points with even denominators from the domain of $\psi_{s}(z)$.  If $z=p/q$ with $q$ odd, then $\psi_s(p/q)$ reduces to linear combinations of Hurwitz zeta functions, and \eqref{def} converges for $s>1$.  Convergence questions become more complicated if $z$ is irrational.  Irrationality guarantees that $|\sec(\pi n z)|\ne \infty$, but we still have to account for how often $|\sec(\pi n z)|$ is large compared to $n^s$.  The Thue-Siegel-Roth Theorem gives that $|\sec(\pi n z)|\ll n^{1+\varepsilon}$ when $z$ is algebraic and irrational, and this proves that \eqref{def} converges for $s>2$.  The case when $s=2$ requires a more subtle argument.  We use a theorem of Worley to show that the set of $n$'s where $|\sec(\pi n z)|$ is large is sparse enough to ensure that \eqref{def} converges.  We are grateful to Florian Luca for providing this part of the proof.  In summary, we have the following theorem:

\begin{theorem}\label{convergence}
The series in \eqref{def} converges absolutely in the following cases:
\begin{enumerate}
 \item When $z=p/q$ with $q$ odd and $s> 1$.
 \item When $z$ is algebraic irrational, and $s>2$.
 \item When $z$ is algebraic irrational, and $s=2$.
\end{enumerate}
\end{theorem}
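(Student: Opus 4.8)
The plan is to treat the three cases separately, starting from a common reduction. All three rest on the elementary inequality $|\sin(\pi t)|\ge 2\|t\|$, where $\|t\|$ denotes the distance from $t$ to the nearest integer. Since $\cos(\pi nz)=\sin\bigl(\pi(nz-\tfrac12)\bigr)$, this gives
\[
\bigl|\sec(\pi nz)\bigr|\le \frac{1}{2\,\|nz-\tfrac12\|},
\]
so the entire question is how often $\|nz-\tfrac12\|$ is small, i.e.\ how well the shifted target $\tfrac12$ is approximated by the orbit $\{nz\}$, equivalently how well $z$ is approximated by fractions $a/(2n)$ with $a$ odd. For Case (1), if $z=p/q$ with $q$ odd then $\cos(\pi np/q)$ is periodic in $n$ with period dividing $2q$ and never vanishes, because $np/q\in\tfrac12+\mathbb{Z}$ would force the even number $2np$ to equal the odd number $q(2k+1)$. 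Hence $\sec(\pi np/q)$ is bounded, the series is dominated by a constant multiple of $\sum_n n^{-s}$, and it converges for $s>1$; sorting $n$ by its residue modulo $2q$ exhibits $\psi_s(p/q)$ as a finite combination of Hurwitz zeta functions.

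For Case (2), fix $z$ algebraic irrational and $\varepsilon>0$. The Thue--Siegel--Roth theorem yields $c>0$ with $|z-a/(2n)|>c\,(2n)^{-2-\varepsilon}$ for all integers $a$, uniformly (reducing the fraction only shrinks the denominator). Writing $\|nz-\tfrac12\|=n\,|z-a/(2n)|$ for the optimal odd $a$, we obtain $\|nz-\tfrac12\|\gg n^{-1-\varepsilon}$, hence $|\sec(\pi nz)|\ll n^{1+\varepsilon}$ and
\[
\sum_{n\ge1}\frac{|\sec(\pi nz)|}{n^{s}}\ll\sum_{n\ge1}\frac{1}{n^{\,s-1-\varepsilon}}.
\]
Taking $\varepsilon<s-2$ makes the exponent exceed $1$, proving absolute convergence whenever $s>2$.

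Case (3) is the heart of the matter. At $s=2$ the pointwise estimate above only gives $\sum_n n^{-1+\varepsilon}$, which diverges, so we cannot argue term by term; instead the plan is to bound the partial sums
\[
S(N):=\sum_{n\le N}\frac{1}{\|nz-\tfrac12\|}
\]
and then recover convergence of $\sum_n n^{-2}\|nz-\tfrac12\|^{-1}$ by summation by parts. Indeed, if we can show $S(N)\ll N^{1+\varepsilon}$, then Abel summation against the weights $n^{-2}$ turns this into a bound of the form $\sum_n n^{-2+\varepsilon}$ (or $\sum_n (\log n)\,n^{-2}$ if one only proves $S(N)\ll N\log N$), which converges. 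So the whole problem reduces to the counting estimate $S(N)\ll N^{1+\varepsilon}$.

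The difficulty, and the reason a result beyond Roth is needed, is the inhomogeneous shift by $\tfrac12$: controlling $\|nz-\tfrac12\|$ amounts to counting $n\le N$ for which $2nz$ lies within a prescribed distance of an \emph{odd} integer, a parity constraint that the convergents $p_j/q_j$ of $z$ need not respect. This is exactly where Worley's theorem enters: it describes every sufficiently good rational approximation $p/q$ of $z$ as a bounded integer combination $\frac{r\,p_j+s\,p_{j-1}}{r\,q_j+s\,q_{j-1}}$ of two consecutive convergents, with $|r|,|s|$ bounded in terms of the approximation quality alone. Using this, I would sort the $n\le N$ by the dyadic size of $\|nz-\tfrac12\|$; the $n$ forcing a given small value are attached, boundedly many at a time, to the finitely many convergents with $q_j\ll N$, the relevant ones being those of the correct parity produced by Worley's combinations. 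Counting these, and using Roth once more to bound the partial quotients (so that $q_{j+1}\ll q_j^{1+\varepsilon}$ and the denominators grow at least geometrically), yields $S(N)\ll N^{1+\varepsilon}$ and completes the proof. The main obstacle I expect is precisely this last counting step: converting Worley's structural parametrization into a uniform bound on the number of $n$ in each dyadic window at which $|\sec(\pi nz)|$ is large, while correctly accounting for the even-denominator, odd-numerator constraint imposed by the poles of the secant.
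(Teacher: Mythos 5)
Your treatment of parts (1) and (2) coincides with the paper's: boundedness of the finitely many values $\sec(n\pi p/q)$ for part (1), and the reduction $|\sec(\pi nz)|\ll \|nz-\tfrac12\|^{-1}$ combined with Thue--Siegel--Roth to get $|\sec(\pi nz)|\ll n^{1+\varepsilon}$ for part (2). Those two parts are complete and essentially identical to the paper.

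Part (3), however, is a plan rather than a proof, and the step you yourself flag as ``the main obstacle'' is precisely the step that constitutes the proof. You reduce everything to the counting estimate $S(N):=\sum_{n\le N}\|nz-\tfrac12\|^{-1}\ll N^{1+\varepsilon}$, but you do not establish it; you only indicate that Worley and Roth should yield it via a dyadic decomposition. The difficulty is genuine: in a window $n\asymp 2^m$, the trivial bound on the number of $n$ with $\|nz-\tfrac12\|\asymp 2^{-j}$ is useless once $j$ is comparable to $m$ (a priori all $2^m$ such $n$ could sit at scale $2^{-m}$, which would permit $S(N)\gg N^2$ and kill the Abel-summation step), so one must actually prove a paucity statement at the small scales, and that is where all the work lies. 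The paper organizes this differently and more cheaply: it uses a single threshold rather than dyadic scales. For $n$ with $\|nz-\tfrac12\|\ge (\log n)^2/n$ the term is $\ll 1/(n(\log n)^2)$, summable no matter how many such $n$ there are. For the remaining $n$, the inequality $|z-(1+2k_n)/(2n)|<(\log n)^2/n^2$ is a Worley-quality approximation with $k=(\log q_\ell)^2$, where $q_{\ell-1}\le 2n<q_\ell$; Worley's theorem caps the number of such $n$ in that range by $O((\log q_\ell)^4)$, the convergent inequalities give $|\sec(n\pi z)|/n^2\ll a_{\ell+1}q_\ell^2/q_{\ell-1}^3$ for each of them, Roth bounds $a_{\ell+1}\ll q_\ell^{\varepsilon}$ and $q_{\ell+1}\ll q_\ell^{1+\varepsilon}$, and the at-least-Fibonacci growth of $q_\ell$ makes $\sum_\ell (\log q_\ell)^4 q_\ell^{-(1-\varepsilon')}$ converge. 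Until you supply a quantitative count of this kind --- including the observation that Worley's parametrization covers all sufficiently good approximations $p/(2n)$, convergent or not, which is exactly how the parity constraint is absorbed --- part (3) remains unproven.
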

\begin{proof}[Proof of Theorem \ref{convergence}, parts (1) and (2)] Let $z$ be a rational number with odd denominator in reduced form. It is easy to see that the set of real numbers
$\{\sec(n\pi z)\}_{n \in \mathbb{N}}$ is finite.  Let $M=\max_{n \in \mathbb{N}} |\sec(n\pi z)|$. Then we have
\begin{equation*}
\frac{|\sec(\pi n z)|}{n^{s}}\leq \frac{M}{n^{s}}.
\end{equation*}
It follows easily from the Weierstrass $M$-test that \eqref{def} converges absolutely for $s>1$.

Now we prove the second part of the theorem.  By elementary estimates
\begin{equation}\label{secant estimate}
|\sec(\pi n z)|=\left|\csc\left(\pi(n z-1/2)\right)\right|\ll \frac{1}{\left|n z-\frac{1}{2}-k_n\right|},
\end{equation}
where $k_n$ is the integer which minimizes $|n z-\frac{1}{2}-k_n|$.  Now appeal to the Thue-Siegel-Roth Theorem \cite{R}.  In particular, for any algebraic irrational number $\alpha$, and given $\varepsilon>0$, there
exists a constant $C(\alpha, \varepsilon)$, such that
\begin{equation}\label{Roth's theorem}
\left|\alpha-\frac{p}{q}\right|> \frac{C(\alpha, \varepsilon)}{q^{2+\varepsilon}}.
\end{equation}
If we set $\alpha=z$, then \eqref{secant estimate} becomes
\begin{equation*}
|\sec(\pi n z)|\ll \frac{1}{n\left|z-\frac{2k_n+1}{2n}\right|}\ll n^{1+\varepsilon}.
\end{equation*}
Therefore we have
\begin{equation*}
\frac{|\sec(n\pi z)|}{n^{s}}\ll \frac{1}{n^{s-1-\varepsilon}},
\end{equation*}
and this implies that \eqref{def} converges absolutely for $s>2+\varepsilon$.  Since $\varepsilon$ is arbitrarily small the result follows.
\end{proof}

In order to prove the third part of Theorem \ref{convergence}, we require some background on continued fractions.  Recall that any irrational number $z$ can be represented as an infinite continued fraction
\[z=[a_0;a_1,a_2,\dots],\]
and the convergents are given by
\[[a_0;a_1,\dots, a_\ell]=\frac{p_\ell}{q_\ell},\]
which satisfy
\begin{eqnarray}
\label{rec1} p_\ell&=&a_{\ell} p_{\ell-1}+p_{\ell-2},\\
\label{rec2} q_\ell&=&a_{\ell} q_{\ell-1}+q_{\ell-2}.
\end{eqnarray}
Convergents provide the best possible approximations to algebraic numbers among rational numbers with bounded denominators.  In other words, if $0<q<q_\ell$, then
\begin{equation}\label{convergent1}
\left| z -\frac{p}{q}\right|>\left| z -\frac{p_\ell}{q_\ell}\right|.
\end{equation}
In addition
\begin{equation}\label{convergent2}
\frac{1}{q_\ell q_{\ell +1}}>\left| z -\frac{p_\ell}{q_\ell}\right| > \frac{1}{q_\ell(q_{\ell+1}+q_\ell)}.
 \end{equation}
Now we state a weak version of a theorem due to Worley \cite[Thm. 1]{Wor81}.

\begin{theorem}[Worley]\label{Worley}
Let $z$ be irrational, $k \geq \frac{1}{2}$, and $p/q$ be a rational approximation to $z$ in reduced form for which
\[\left| z - \frac{p}{q} \right| <\frac{k}{q^2}.\]
Then either $p/q$ is a convergent $p_\ell/q_\ell$ to $z$, or
\[\frac{p}{q} = \frac{a p_\ell+ b p_{\ell-1}}{aq_\ell+bq_{\ell-1}}, \quad |a|,|b| < 2k,\]
where $a$ and $b$ are integers.
\end{theorem}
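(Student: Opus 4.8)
The plan is to realise $p/q$ through a unimodular change of basis against two consecutive convergents of $z$, and then to read off the sizes of the coefficients from the approximation hypothesis. First I would set aside the case in which $p/q$ is itself a convergent and assume it is not. Since $q_0=1$ and the $q_\ell$ increase without bound, there is a unique index $\ell$ with $q_\ell\le q<q_{\ell+1}$. By \eqref{rec1}--\eqref{rec2} one has $p_\ell q_{\ell-1}-p_{\ell-1}q_\ell=(-1)^{\ell-1}$, so the integer matrix with columns $\binom{p_\ell}{q_\ell}$ and $\binom{p_{\ell-1}}{q_{\ell-1}}$ has determinant $\pm1$ and is invertible over $\mathbb Z$; hence there are unique integers $a,b$ with $p=a\,p_\ell+b\,p_{\ell-1}$ and $q=a\,q_\ell+b\,q_{\ell-1}$. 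This already puts $p/q$ in the required shape, so the whole theorem reduces to the bound $|a|,|b|<2k$.

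Setting $\delta_j:=q_jz-p_j$ and $\eta:=qz-p$, linearity gives the single identity $\eta=a\,\delta_\ell+b\,\delta_{\ell-1}$, while the hypothesis becomes $|\eta|=q\,|z-p/q|<k/q$. The upper bound in \eqref{convergent2} yields $|\delta_j|<1/q_{j+1}$, and since consecutive convergents lie on opposite sides of $z$, the numbers $\delta_\ell$ and $\delta_{\ell-1}$ have opposite signs. Solving the pair $q=aq_\ell+bq_{\ell-1}$, $\eta=a\delta_\ell+b\delta_{\ell-1}$ for $b$ gives $b=(-1)^{\ell-1}(q_\ell\,\eta-q\,\delta_\ell)$, whence
\begin{equation*}
|b|\le q_\ell|\eta|+q|\delta_\ell|<k\,\frac{q_\ell}{q}+\frac{q}{q_{\ell+1}}<k+1,
\end{equation*}
using $q_\ell\le q<q_{\ell+1}$. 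For $k\ge1$ this is already $|b|<2k$; the narrow range $\tfrac12\le k<1$ is dispatched by integrality of $b$ (which forces $|b|\le1<2k$), together with Legendre's theorem at the endpoint $k=\tfrac12$, where every such $p/q$ is a convergent. Thus the lower-index coefficient is never an obstacle.

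The hard part is the other coefficient $a=(-1)^{\ell-1}(q\,\delta_{\ell-1}-q_{\ell-1}\eta)$, for which the triangle inequality is useless: the term $q|\delta_{\ell-1}|$ is of order $q/q_\ell$, which by \eqref{rec2} is comparable to the partial quotient $a_{\ell+1}$ and so unbounded. The geometric content I would exploit is that a large $a$ can occur only when $q$ is close to $q_{\ell+1}$, i.e.\ when $p/q$ is a good semiconvergent hugging the next convergent $p_{\ell+1}/q_{\ell+1}$. In that regime I would re-expand the same vector in the neighbouring basis $\{(p_{\ell+1},q_{\ell+1}),(p_\ell,q_\ell)\}$: using $p_{\ell-1}=p_{\ell+1}-a_{\ell+1}p_\ell$ and its analogue for the denominators, the coefficients become $(b,\;a-a_{\ell+1}b)$, so the two roles interchange and the previously large coefficient, which satisfies $|a-a_{\ell+1}b|=|q\,p_{\ell+1}-q_{\ell+1}\,p|$, is now small precisely because $q_{\ell+1}/q$ is close to $1$. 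The decisive and most delicate step, which I expect to be the real obstacle, is to make this quantitative: one must show $\min\bigl(|a|,\,|a-a_{\ell+1}b|\bigr)<2k$ by combining the sharp lower bound $|\delta_\ell|>1/(q_\ell+q_{\ell+1})$ from \eqref{convergent2} with the inequality $|\eta|<k/q$, and it is exactly this balance that I expect to produce the constant $2k$. Selecting whichever of the two bases realises the smaller coefficient then exhibits $p/q$ with both coefficients strictly below $2k$, completing the proof.
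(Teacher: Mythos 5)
First, a point of reference: the paper does not prove this statement at all --- it is quoted as ``a weak version of a theorem due to Worley'' and used as a black box with a citation to \cite{Wor81}, so there is no in-paper proof to compare against. Judged on its own, your proposal sets up the right machinery: the unimodular basis change against $(p_\ell,q_\ell)$, $(p_{\ell-1},q_{\ell-1})$ with $q_\ell\le q<q_{\ell+1}$, the identities $q=aq_\ell+bq_{\ell-1}$, $\eta=a\delta_\ell+b\delta_{\ell-1}$, and the clean bound $|b|<k q_\ell/q+q/q_{\ell+1}<k+1\le 2k$ are all correct, as is the observation that the other coefficient satisfies $|a|<q/q_\ell+k$ and can a priori be as large as the partial quotient $a_{\ell+1}$.

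However, there is a genuine gap, and you flag it yourself: the assertion $\min\bigl(|a|,\,|a-a_{\ell+1}b|\bigr)<2k$ is stated as something you ``expect'' the balance of inequalities to produce, but it is not derived, and it does not follow from the estimates you have assembled. The triangle inequality gives $|a|<q/q_\ell+k$ and $|a-a_{\ell+1}b|<k\,q_{\ell+1}/q+1$, and when $q_{\ell+1}/q_\ell$ is large and $q$ sits in the middle range (say $q\approx\sqrt{q_\ell q_{\ell+1}}$), \emph{both} of these upper bounds exceed $2k$, so no choice of basis closes the argument from these inequalities alone. The actual content of Worley's theorem lives precisely here: one must exploit that $\delta_\ell$ and $\delta_{\ell-1}$ have opposite signs together with the lower bound $|\delta_\ell|>1/(q_\ell+q_{\ell+1})$ from \eqref{convergent2}, and run a case analysis on the signs of $a$ and $b$ (when they agree, the two terms of $\eta=a\delta_\ell+b\delta_{\ell-1}$ partially cancel in a controlled way; when they differ, $|\eta|\ge |a||\delta_\ell|$-type bounds kick in), which is how the constant $2k$ actually emerges in \cite{Wor81}. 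As written, your argument proves the bound on one coefficient only and reduces the theorem to its hardest step without supplying it; to be a proof you would need to carry out that sign analysis explicitly.
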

%When we apply Theorem \ref{Worley}, we typically assume that $\ell$ is the maximal index such
%that $q_\ell\leq q$.
Now we can complete the proof of Theorem \ref{convergence}.  The following proof was kindly provided by Florian Luca.

\begin{proof}[Proof of Theorem \ref{convergence}, part (3)]
Let $k_n$ be the integer which minimizes $|n z-\frac{1}{2}-k_n|$.  Let $W_z$ denote the set of integers where the quantity is large:
\begin{equation*}
W_z =\left \{ n\in \mathbb{N} \, :\, \left|n z-\frac{1}{2}-k_n\right|\geq\frac{(\log n)^2}{n} \right\}.
\end{equation*}
Then
\begin{equation*}
\sum_{n \in W_z} \frac{|\sec(n \pi z)|}{n^2} \ll |\sec(\pi z)|+ \sum_{n=2}^\infty \frac{1}{n (\log n)^2},
\end{equation*}
which converges.

Now assume that $n \not \in W_z$.  Then
\begin{equation*}
 \left|z-\frac{1+2k_n}{2n}\right|<\frac{(\log n)^2}{n^2}.
\end{equation*}
Consider the convergents of $z$. Let $\ell$ be such that $q_{\ell-1} \leq 2 n < q_{\ell}$.
By Theorem \ref{Worley} there are at most  $O\left((\log q_{\ell})^4\right)$ solutions to
\begin{equation}\label{notW}
 \left|z-\frac{p}{2n}\right|<\frac{(\log n)^2}{n^2}
 \end{equation}
with $p \in \mathbb{Z}$ (i.e. consider all values of $|a|,|b|<2k=2(\log q_\ell)^2$).

From equations \eqref{convergent1} and \eqref{convergent2} we have
\[ \left| z-\frac{p}{2 n}\right|\geq \left|z-\frac{p_{\ell}}{q_{\ell}}\right|\geq \frac{1}{q_{\ell}(q_{\ell+1}+q_{\ell})}.\]
Combining this with equation \eqref{rec2} implies
\[ \left|nz-\frac{1+2k_n}{2}\right|n^2\geq \frac{n^3}{q_\ell(q_{\ell+1}+q_{\ell})} \geq \frac{q_{\ell-1}^3}{8q_\ell\left(q_{\ell}(a_{\ell+1}+1)+q_{\ell-1}\right)}\geq \frac{q_{\ell-1}^3}{8q_\ell^2(a_{\ell+1}+2)}.\]
Hence, if $n \not\in W_z$, we find that
\[\frac{|\sec(n \pi z)|}{n^2} \ll \frac{q_{\ell}^2(a_{\ell+1}+2)}{q_{\ell-1}^3}\ll \frac{a_{\ell+1}q_\ell^2}{q_{\ell-1}^3}.\]
Combining the Thue-Siegel-Roth Theorem (equation \eqref{Roth's theorem}) with \eqref{convergent2}, implies that if $z$ is algebraic
\[\frac{1}{q_\ell q_{\ell+1}}>\left|z -\frac{p_\ell}{q_\ell}\right| > \frac{C(z,\varepsilon)}{q_\ell^{2+\varepsilon}}.\]
Thus $q_{\ell+1}\ll  q_\ell^{1+\varepsilon}$.  This allows us to place an upper bound on $a_{\ell+1}$:
\[a_{\ell+1} \leq  \frac{q_{\ell+1}}{q_\ell}\ll q_\ell^\varepsilon\]
Putting everything together gives the bound
\[\frac{|\sec(n \pi z)|}{n^2} \ll \frac{1}{q_{\ell-1}^{1-4\varepsilon}},\]
and as a result
\[\sum_{n \not \in W_z} \frac{|\sec(n \pi z)|}{n^2} \ll \sum_{\ell=1}^\infty \frac{(\log q_\ell)^4}{q_{\ell}^{1-\varepsilon'}} \ll \sum_{\ell=1}^\infty \frac{1}{q_{\ell}^{1-\varepsilon''}}.\]
Since $q_{\ell+1}=a_{\ell+1}q_\ell+q_{\ell-1} \geq q_\ell+q_{\ell-1}$, we conclude that $q_\ell \geq F_\ell$, where $F_\ell$ denotes the $\ell$th Fibonacci number.  Since the Fibonacci numbers grow exponentially, we have
\[q_\ell \gg \frac{\varphi^\ell}{\sqrt{5}}, \mbox{ where } \varphi=\frac{1+\sqrt{5}}{2}.\]
Setting $\tilde{\varphi}=\varphi^{1-\varepsilon''}>1$, we finally obtain
\[\sum_{n \not \in W_z} \frac{|\sec(n \pi z)|}{n^2} \ll \sum_{\ell=1}^\infty \frac{1}{\tilde{\varphi}^\ell}=\frac{1}{\tilde{\varphi}-1} <\infty.\]
Thus, it follows that \eqref{def} converges absolutely when $s=2$.
\end{proof}

\section{A modular transformation for $\psi_{k}(z)$}\label{modulartransf}
We begin by noting the following trivial properties of $\psi_s(z)$:
\begin{align}
\psi_{s}(-z)=&\psi_{s}(z),\label{psi even}\\
\psi_{s}(z+2)=&\psi_{s}(z),\label{psi period}\\
2^{1-s}\psi_{s}(2z)=&\psi_{s}(z)+\psi_{s}(z+1).\label{psi semi period}
\end{align}
The main goal of this section is to prove that $\psi_{k}(z)$ also satisfies a modular transformation formula.  We start from the partial fractions decompositions of $\sec(\pi x )$ and $\csc(\pi x)$, and then perform a convolution trick to obtain an expansion for $\sec(\pi x)\csc(\pi x z)$ (equation \eqref{lemma formula}).  Differentiating with respect to $x$ then leads to the transformation for $\psi_{k}(z)$ (equation \eqref{psi functional equation}).
This method is originally due to the third author, who used used it to rediscover the Newberger summation rule for Bessel functions \cite{Rg}, \cite{Ne}:
\begin{equation}\label{newberger}
J_{x}(y)J_{-x}(y)\frac{\pi}{\sin(\pi x)}=\sum_{n=-\infty}^{\infty}\frac{J_{n}^2(y)}{n+x}.
\end{equation}
Equation \eqref{newberger} follows from applying the convolution trick to partial fractions expansions for $J_x(y)$ and $J_{-x}(y)$ \cite{Lo}.

\begin{lemma} \label{lemma trick} Let $\chi_{-4}(n)$ denote the Legendre symbol modulo $4$.  Suppose that $x$ and $z$ are selected appropriately.\footnote{We assume that $z$ is irrational and algebraic, and $x$ is selected so that the denominators in the sums are never zero.} Then
\begin{equation}\label{lemma formula}
\begin{split}
\pi\csc(\pi z x)\sec(\pi x )=&\frac{1}{z x}+8 x\sum_{n=1}^{\infty}\frac{\chi_{-4}(n) \csc\left(\pi n z /2\right)}{n^2-4 x^2}
-2z x\sum_{n=1}^{\infty}\frac{\sec\left(\pi n (1+1/z)\right)}{n^2-z^2 x^2}.
\end{split}
\end{equation}
%and
%\begin{equation}\label{trick}
%\begin{split}
%\frac{\pi\sin(\pi xz)}{\sin(\pi x(1-z)) \sin(\pi x(1+z))} = &\frac{z}{x(1-z^2)}-x(1-z)\sum_{n=1}^{\infty} \frac{\sec(\pi nz/(1-z))}{n^2-(1-z)^2 %x^2}\\
%&+x(1+z)\sum_{n=1}^{\infty} \frac{\sec(\pi nz/(1+z))}{n^2-(1+z)^2 x^2}.
%\end{split}
%\end{equation}
\end{lemma}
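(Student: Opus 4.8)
The plan is to regard the left-hand side as a meromorphic function of the single complex variable $x$, with $z$ a fixed algebraic irrational, and to recover \eqref{lemma formula} as its Mittag-Leffler (partial-fraction) expansion. The \emph{convolution trick} is exactly the principle that the partial-fraction expansion of a product is assembled from the poles of each factor, the other factor being evaluated at those poles; this is the same mechanism that produces the $J_n^2(y)$ in \eqref{newberger}. I would begin by recording the two classical expansions
\begin{equation*}
\pi\sec(\pi x)=\sum_{m\in\mathbb{Z}}\frac{(-1)^{m+1}}{x-(m+\frac12)},\qquad
\pi\csc(\pi w)=\sum_{k\in\mathbb{Z}}\frac{(-1)^{k}}{w-k},
\end{equation*}
which display directly the poles and residues that drive the computation.

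Next I would locate the poles of $f(x):=\pi\csc(\pi z x)\sec(\pi x)$. Because $z$ is irrational the two pole families are disjoint: $\sec(\pi x)$ contributes simple poles at the half-integers $x=\pm(m+\frac12)$, the factor $\csc(\pi z x)$ contributes simple poles at $x=\pm n/z$ with $n\ge1$, and there is a single pole at $x=0$. At $x=0$ one reads off $\res_{x=0}f=\frac1z$, producing the term $\frac{1}{zx}$. At $x=m+\frac12$ the residue is $(-1)^{m+1}\csc\!\big(\pi z(m+\frac12)\big)$; writing $n=2m+1$, using $\chi_{-4}(n)=(-1)^m$, pairing the poles at $\pm(m+\frac12)$, and invoking that $\csc$ is odd, this family collapses to $8x\sum_{n\ge1}\chi_{-4}(n)\csc(\pi n z/2)/(n^{2}-4x^{2})$. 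At $x=n/z$ the residue is $(-1)^{n}\sec(\pi n/z)/z$; here the identity $\sec\!\big(\pi n(1+1/z)\big)=(-1)^{n}\sec(\pi n/z)$ rewrites it as $\sec\!\big(\pi n(1+1/z)\big)/z$, and pairing $\pm n/z$ (using that $\sec$ is even) yields $-2zx\sum_{n\ge1}\sec(\pi n(1+1/z))/(n^{2}-z^{2}x^{2})$. Summing the three contributions reproduces exactly the right-hand side of \eqref{lemma formula}.

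Two points remain, and this is where the real work lies. First, I must justify that $f$ equals the sum of its principal parts with \emph{no} additional entire summand. Since $|\sec(\pi x)|$ and $|\csc(\pi z x)|$ both decay exponentially as $|\Im x|\to\infty$, the function $f$ tends to $0$ away from the real axis, and the task is to choose a sequence of expanding contours $|x|=R_N$ that stay a definite distance from both pole families and on which $f$ is uniformly small, so that the standard contour-integral argument forces the entire part to vanish. This is the main obstacle: because $z$ is irrational, a pole $n/z$ of $\csc(\pi z x)$ can lie very close to a half-integer pole of $\sec(\pi x)$, and bounding $f$ near such near-collisions—hence selecting admissible radii $R_N$—requires precisely the Diophantine input that $z$ is algebraic, via the Thue-Siegel-Roth estimate \eqref{Roth's theorem}. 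This is what the hypothesis recorded in the footnote secures. Second, the two series on the right converge only conditionally in the delicate regime, but the terms $\csc(\pi n z/2)/(n^{2}-4x^{2})$ and $\sec(\pi n(1+1/z))/(n^{2}-z^{2}x^{2})$ are of exactly the type controlled in the proof of Theorem \ref{convergence}(3), so their convergence follows from the Worley-sparseness argument already established there. With the expansion justified and the series convergent, the symmetric pairing assembles into the stated identity and the proof is complete.
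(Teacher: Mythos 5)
Your residue bookkeeping is correct: the principal parts of $f(x)=\pi\csc(\pi z x)\sec(\pi x)$ at $x=0$, at $x=\pm(m+\tfrac12)$, and at $x=\pm n/z$ do assemble, after pairing by oddness, into exactly the three terms on the right of \eqref{lemma formula}. But your route is genuinely different from the paper's, and the comparison matters for where the burden of proof falls. The paper never touches contour integration: it multiplies the two classical expansions \eqref{trig 1} and \eqref{trig 2}, applies the elementary identity $\frac{1}{(A-u)(B-u)}=\frac{1}{B-A}\bigl(\frac{1}{A-u}-\frac{1}{B-u}\bigr)$ with $u=z^2x^2$ to the product of denominators, interchanges the order of summation, and resums each inner series using \eqref{trig 1}--\eqref{trig 2} again; the only analytic input is the absolute convergence supplied by Theorem \ref{convergence}, which licenses the rearrangement and the final splitting of the sums. (Incidentally, your remark that the right-hand series ``converge only conditionally'' is off: Theorem \ref{convergence} gives absolute convergence, and the paper's proof needs exactly that.)

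The genuine gap in your version is that its central step is announced rather than performed. What you establish is that the sum of principal parts equals the stated right-hand side; the lemma is equivalent to the assertion that the entire odd function $f(x)$ minus that sum vanishes, and you describe this only as ``the task'' and ``the main obstacle.'' Moreover, the plan cannot work in the naive form you indicate: on any admissible expanding circle $|x|=R_N$, the best bound available near the real axis is $|f|\ll R_N^{1+\varepsilon}$, since Roth's estimate \eqref{Roth's theorem} controls $\|R_N z\|$ only up to $R_N^{-1-\varepsilon}$; uniform boundedness on the contours, which the standard Mittag--Leffler theorem requires, is not attainable. One is forced to use the second-order subtracted expansion, which yields the paired terms $\frac{2x}{x^2-a^2}$ only modulo an extra linear summand $cx$; killing $c$ needs a further argument (absolute convergence of the residue sums $\sum b_k/a_k^2$ -- again Theorem \ref{convergence} -- to justify regrouping, followed by letting $x\to i\infty$, where $f$ decays exponentially and the series tends to $0$ by domination). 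None of this is fatal, and the Diophantine input is the same as the paper's, but as written your proof stops precisely where the difficulty begins, whereas the paper's algebraic rearrangement sidesteps the entire-part question altogether.
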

\begin{proof}Recall the classical partial fractions expansions \cite{Gr}:
\begin{align}
%\pi\sec(\pi x)=&2\sum_{n=0}^{\infty}\frac{(-1)^n~(n+1/2)}{(n+1/2)^2-x^2},
\pi\left(\sec(\pi x)-1\right)=&16 x^2 \sum _{n=1}^{\infty } \frac{\chi_{-4}(n)}{n \left(n^2-4x^2\right)},\label{trig 1}\\
\pi\csc(\pi x)-\frac{1}{x}=&
2x\sum_{k=1}^{\infty}\frac{(-1)^{k+1}}{k^2-x^2}.\label{trig 2}
\end{align}
Both sums converge uniformly.  Multiplying the formulas together, expanding via partial fractions, and rearranging the order of summation, we have
\begin{align*}
\pi\left(\sec(\pi x )-1\right)\left(\pi\csc(\pi z x)-\frac{1}{z x}\right)
=&32z x^3\sum_{\substack{n\ge1\\k\ge 1}}\frac{(-1)^{k+1}\chi_{-4}(n)}{n\left(n^2-4 x^2\right)\left(k^2-z^2 x^2\right)}\\
=&8z^3 x^3\sum_{\substack{n\ge1\\k\ge 1}}\frac{(-1)^{k+1}\chi_{-4}(n)}{n\left(k^2-z^2 n^2/4\right)}\left(\frac{1}{z^2 n^2/4-z^2 x^2}-\frac{1}{k^2-z^2 x^2}\right)\\
=&32z x^3\sum_{n=1}^{\infty}\frac{\chi_{-4}(n)}{n\left(n^2-4 x^2\right)}\left(\sum_{k=1}^{\infty}\frac{(-1)^{k+1}}{k^2-z^2 n^2/4}\right)\\
&+32z x^3\sum_{k=1}^{\infty}\frac{(-1)^{k+1}}{k^2-z^2x^2}\left(\sum_{n=1}^{\infty}\frac{\chi_{-4}(n)}{n\left(n^2-4 k^2/z^2\right)}\right).
\end{align*}
By \eqref{trig 1} and \eqref{trig 2} this becomes
\begin{align*}
\pi\left(\sec(\pi x )-1\right)\left(\pi\csc(\pi z  x )-\frac{1}{z x}\right)=&32x^3\sum_{n=1}^{\infty}\frac{\chi_{-4}(n)}{n^2\left(n^2-4 x^2\right)}\left(\pi\csc\left(\pi n z/2\right)-\frac{2}{n z}\right)\\
&+2\pi z^3 x^3\sum_{k=1}^{\infty}\frac{(-1)^{k+1}}{k^2(k^2-z^2 x^2)}\left(\sec(\pi k/z)-1\right).
\end{align*}
This reduces to \eqref{lemma formula} after several additional applications of \eqref{trig 1} and \eqref{trig 2}.  We can split up the sums because all of the individual terms converge absolutely.  For instance, we can prove that
\begin{align*}
\sum_{k=1}^{\infty}\frac{(-1)^{k+1}\sec(\pi k/z)}{k^2-z^2 x^2},
\end{align*}
converges absolutely, by showing that the summand is $\ll |\sec(\pi k/z)|k^{-2}$, and then applying Theorem \ref{convergence} for appropriate choices of $z$.
\end{proof}

\begin{theorem}\label{theorem 1}Let $E_m$ denote the Euler numbers, and let $B_m$ denote the Bernoulli numbers. Suppose that $k\in2\mathbb{N}$.  Then for appropriate choices of $z$:
\begin{equation}\label{psi functional equation}
\begin{split}
(1+z)^{k-1} \psi_{k}&\left(\frac{z}{1+z}\right) - (1-z)^{k-1} \psi_{k} \left(\frac{z}{1-z}\right)\\
&= \frac{(\pi i)^{k}}{k!} \sum_{m=0}^{k}(2^{m-1}-1) B_{m} E_{k-m} \binom{k}{m}\left[(1+z)^{m-1}-(1-z)^{m-1}\right].
\end{split}
\end{equation}
\end{theorem}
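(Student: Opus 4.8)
The plan is to treat the lemma's identity \eqref{lemma formula} as an identity of meromorphic functions of $x$, with $z$ a fixed algebraic irrational parameter, and to read off the coefficient of $x^{k-1}$ on each side. Since the left-hand side $\pi\csc(\pi z x)\sec(\pi x)$ is odd in $x$ and $k$ is even, $x^{k-1}$ is an odd power and carries genuine information; equivalently, I would differentiate $k-1$ times and let $x\to 0$. To expand the left-hand side I would use the classical series
\[
\pi\csc(\pi z x)=\sum_{n\ge 0}\frac{(-1)^{n-1}2(2^{2n-1}-1)B_{2n}\pi^{2n}}{(2n)!}\,z^{2n-1}x^{2n-1},\qquad
\sec(\pi x)=\sum_{j\ge 0}\frac{(-1)^{j}E_{2j}\pi^{2j}}{(2j)!}\,x^{2j},
\]
whose Cauchy product (matching $2n-1+2j=k-1$, i.e. $m:=2n$ and $k-m=2j$) produces precisely the binomial $\binom{k}{m}$ out of the two factorials $m!\,(k-m)!$, the factor $(2^{m-1}-1)B_{m}E_{k-m}$, the power $(\pi i)^{k}$ out of the accumulated signs, and a Laurent polynomial in $z$ supported on the odd powers $z^{m-1}$ with $m$ even. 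I will call this polynomial $P_{k}(z)$.

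On the right-hand side of \eqref{lemma formula} I would expand $\tfrac{1}{n^{2}-4x^{2}}$ and $\tfrac{1}{n^{2}-z^{2}x^{2}}$ as geometric series. The coefficient of $x^{k-1}$ in the first sum is $A_{k}(z):=2^{k-1}\sum_{n\ge 1}\chi_{-4}(n)\csc(\pi n z/2)\,n^{-k}$, and the coefficient in the second sum is $-2z^{k-1}\sum_{n\ge 1}\sec(\pi n(1+1/z))\,n^{-k}$, which by the definition \eqref{def} is simply $-2z^{k-1}\psi_{k}(1+1/z)$. Thus, writing the free parameter as $w$, I obtain the intermediate identity $-2w^{k-1}\psi_{k}(1+1/w)=P_{k}(w)-A_{k}(w)$.

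The transformation \eqref{psi functional equation} then comes from specializing this intermediate identity at $w=-(1+z)$ and at $w=-(1-z)$. Indeed $1+1/w$ becomes $z/(1+z)$ and $-z/(1-z)$ respectively, and the evenness \eqref{psi even} rewrites the second as $\psi_{k}(z/(1-z))$; since $k-1$ is odd, the prefactors become $2(1+z)^{k-1}$ and $2(1-z)^{k-1}$. Subtracting the two specializations and dividing by $2$ reproduces the left side of \eqref{psi functional equation} verbatim, while $\tfrac12\big[P_{k}(-(1+z))-P_{k}(-(1-z))\big]$ reassembles the antisymmetrized Bernoulli–Euler sum containing $\big[(1+z)^{m-1}-(1-z)^{m-1}\big]$; the odd-$m$ terms drop out automatically because $E_{k-m}=0$ when $k-m$ is odd, matching the fact that $P_k$ carries only odd powers. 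The decisive point is that the two cosecant sums cancel: for odd $n$ one has $\sin(\tfrac{\pi n}{2}+\theta)=\chi_{-4}(n)\cos\theta$, hence $\chi_{-4}(n)\csc(\pi n(1\pm z)/2)=\sec(\pi n z/2)$, so $A_{k}(1+z)=2^{k-1}\sum_{n\text{ odd}}\sec(\pi n z/2)\,n^{-k}$ is even in $z$; using that $A_k$ is odd, $A_{k}(-(1+z))-A_{k}(-(1-z))=-\big[A_{k}(1+z)-A_{k}(1-z)\big]=0$.

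I expect the main obstacles to be twofold. First, justifying the term-by-term extraction of the coefficient of $x^{k-1}$ from these slowly convergent sums requires absolute and uniform convergence on a punctured neighborhood of $x=0$; I would secure this from Theorem \ref{convergence} applied to the arguments $z/2$ and $1+1/z$, exactly in the spirit of the convergence discussion already used in the proof of Lemma \ref{lemma trick}. Second, and most essential, is the trigonometric cancellation $\chi_{-4}(n)\csc(\pi n(1\pm z)/2)=\sec(\pi n z/2)$: this is the linchpin that eliminates the auxiliary cosecant sum $A_{k}$, and although it is elementary once the parity of $n$ is exploited, the entire derivation hinges on it. The remaining work is careful sign-and-parity bookkeeping to confirm the constant $(\pi i)^{k}/k!$ and the vanishing of all odd-$m$ contributions.
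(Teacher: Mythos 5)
Your proposal is correct and follows essentially the same route as the paper: Taylor-expand both sides of Lemma \ref{lemma trick} in $x$, compare coefficients of $x^{k-1}$ to get the intermediate identity involving $\psi_k(1+1/z)$ and the auxiliary cosecant sum, then antisymmetrize in $z$ so that the cosecant sums cancel via the odd-$n$ identity $\chi_{-4}(n)\csc(\pi n(1\pm z)/2)=\sec(\pi n z/2)$ (the paper substitutes $z\mapsto 1\pm z$ and invokes periodicity and evenness, while you substitute $w=-(1\pm z)$ — an immaterial difference). The only slip is the constant in $A_k$, which should be $2^{k+1}$ rather than $2^{k-1}$, but this is harmless since $A_k$ cancels from the final identity.
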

\begin{proof} Recall the Taylor series expansions of cosecant and secant:
\begin{align*}
\pi x \csc(\pi x)=-\sum_{m=0}^\infty (2^m-2)B_m \frac{(\pi i x)^m}{m!},&&
\sec(\pi x)=\sum_{m=0}^{\infty}E_m\frac{(\pi i x)^m}{m!}.
\end{align*}
Expand both sides of \eqref{lemma formula} in a Taylor series with respect to $x$.  Comparing coefficients yields the following identity:
\begin{equation*}
\frac{(\pi i)^k}{k!}\sum_{m=0}^{k}(2^{m-1}-1)B_m E_{k-m}{k\choose m}z^{m-1}=-2^{k}\sum_{n=1}^{\infty}\frac{\chi_{-4}(n)\csc(\pi n z/2)}{n^{k}}+ z^{k-1}\psi_{k}\left(1+1/z\right).
\end{equation*}
Finally, let $z\mapsto(1+z)$ and $z\mapsto(1-z)$, and subtract the two results to recover \eqref{psi functional equation}.  The cosecant sums vanish because $\csc(\pi n (1+z)/2)=\csc(\pi n (1-z)/2)$ whenever $n$ is odd.
\end{proof}
We conclude this subsection with a conjecture on unimodular polynomials.  We call a polynomial \textit{unimodular} if its zeros all lie on the unit circle.  We have observed numerically that the polynomials in \eqref{psi functional equation} have all of their zeros on the vertical line $\Re(z)=0$.  Since the linear fractional transformation $z=(1-x)/(1+x)$ maps the vertical line to the unit circle, we arrive at the following conjecture:
\begin{conjecture}\label{conjecture 1} We conjecture that the polynomial
\begin{equation}
\sum_{m=0}^{k}2^m (2^m-2)B_m E_{k-m}{k\choose m}(x-x^m)(1+x)^{k-m},
\end{equation}
is unimodular when $k\in2\mathbb{N}$.
\end{conjecture}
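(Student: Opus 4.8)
The plan is to recast the statement as a question about zeros on the imaginary axis, to record a self-inversive symmetry first, and then to reduce everything to the real-rootedness of a single real polynomial. Throughout write
\[
P_k(x):=\sum_{m=0}^{k}2^m (2^m-2)B_m E_{k-m}\binom{k}{m}(x-x^m)(1+x)^{k-m}
\]
for the polynomial in the statement. Since $E_k\neq 0$ for even $k$, the $m=0$ term shows that $\deg P_k=k+1$. The first step is a direct symmetry computation: replacing $x$ by $1/x$ and clearing denominators gives
\[
x^{k+1}P_k(1/x)=\sum_{m=0}^{k}2^m(2^m-2)B_m E_{k-m}\binom{k}{m}(x^m-x)(1+x)^{k-m}=-P_k(x),
\]
so $P_k$ is an \emph{anti-reciprocal} real polynomial. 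In particular its zeros are symmetric with respect to the unit circle (any off-circle zero $\rho$ is accompanied by $1/\rho$, $\overline{\rho}$, $1/\overline{\rho}$), and setting $x=1$ forces $P_k(1)=0$. The entire content of the conjecture is therefore to exclude genuine off-circle quadruples.

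Next I would transfer the problem to the imaginary axis via the Cayley transform recorded before the statement. Set $\mathcal Q_k(z):=(1+z)^{k+1}P_k\!\left(\frac{1-z}{1+z}\right)$, a polynomial of degree $k+1$ (its leading coefficient is $P_k(-1)\neq 0$). Because $z\mapsto-z$ corresponds to $x\mapsto 1/x$, the anti-reciprocity of $P_k$ forces $\mathcal Q_k(-z)=-\mathcal Q_k(z)$, so $\mathcal Q_k$ is odd; and since the transform carries $\{|x|=1\}$ onto $\{\Re(z)=0\}$, the conjecture is equivalent to the assertion that every zero of $\mathcal Q_k$ lies on the imaginary axis. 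Writing $z=iy$ and $\mathcal Q_k(iy)=i\,\Phi_k(y)$ with $\Phi_k$ a real polynomial of degree $k+1$, this is in turn equivalent to the \emph{hyperbolicity} of $\Phi_k$, i.e.\ to $\Phi_k$ having only real zeros.

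To attack hyperbolicity I would use the exponential generating function of the whole family, which follows from the two Taylor expansions in the proof of Theorem \ref{theorem 1}. Writing $f(t)=\sum_m 2^m(2^m-2)B_m t^m/m!=-2t/\sinh(2t)$ and $g(t)=\sum_m E_m t^m/m!=\operatorname{sech}(t)$, a convolution of these expansions gives
\[
\sum_{k\ge 0}P_k(x)\frac{t^k}{k!}=\Big(\frac{2xt}{\sinh(2xt)}-\frac{2xt}{\sinh(2t)}\Big)\operatorname{sech}\big((1+x)t\big).
\]
Extracting $P_k$ by a Cauchy integral $P_k(x)=\tfrac{k!}{2\pi i}\oint t^{-k-1}(\cdots)\,dt$ and summing the residues at the purely imaginary poles of $\sinh$ and $\operatorname{sech}$ expresses $P_k(e^{i\theta})$, after removing a unimodular phase, as a real trigonometric series; exhibiting $k+1$ sign changes of this series over a period would show that all $k+1$ zeros lie on the circle. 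An alternative is to convert the simple differential equations satisfied by $f$ and $g$ into a recurrence for $\{\Phi_k\}$ and to run an inductive interlacing argument via the Hermite--Biehler theorem.

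The hard part will be the hyperbolicity of $\Phi_k$ itself. The self-inversive reduction and the generating function are essentially formal, but the usual cheap sufficient conditions fail here: coefficient-dominance criteria of Lakatos--Losonczi type require the extreme coefficients of a self-inversive polynomial to dominate, whereas the Bernoulli numbers $B_m$ grow super-exponentially and make the \emph{middle} coefficients of $P_k$ the largest. I therefore expect that any complete proof must supply genuine analytic input uniform in the even parameter $k$ --- either a residue representation of $\Phi_k$ with a provable sign pattern, or a three-term recurrence placing $\{\Phi_k\}$ in an orthogonal-polynomial / Hermite--Biehler framework --- and that producing this, rather than the symmetry bookkeeping, is the step the present methods cannot yet close.
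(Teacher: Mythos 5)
This statement is a \emph{conjecture} in the paper: the authors offer only the numerical observation that the zeros of the polynomials in \eqref{psi functional equation} lie on $\Re(z)=0$, together with the remark that the Cayley transform $z=(1-x)/(1+x)$ carries that line to the unit circle. There is no proof in the paper to compare yours against, and your attempt does not supply one either. Your preliminary reductions are correct and worth recording: the identity $x^{k+1}P_k(1/x)=-P_k(x)$ does hold (each term $(x-x^m)(1+x)^{k-m}$ transforms as you claim), the degree is $k+1$ with leading coefficient $-E_k\neq 0$ coming solely from the $m=0$ term, the transformed polynomial $\mathcal{Q}_k$ is odd, and the exponential generating function
\[
\sum_{k\ge 0}P_k(x)\frac{t^k}{k!}=\left(\frac{2xt}{\sinh(2xt)}-\frac{2xt}{\sinh(2t)}\right)\operatorname{sech}\bigl((1+x)t\bigr)
\]
checks out against the Taylor expansions used in the proof of Theorem \ref{theorem 1}. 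These steps faithfully reverse the substitution by which the authors arrived at the conjecture from \eqref{psi functional equation}, so they reformulate the problem rather than advance it.

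The genuine gap is exactly the one you flag in your last paragraph: nothing in the proposal establishes that the $k+1$ zeros actually lie on the circle, equivalently that $\Phi_k$ is hyperbolic. The residue-summation and Hermite--Biehler routes are named but not executed, and neither comes with a candidate sign pattern or a candidate three-term recurrence; as written, the argument terminates precisely where the difficulty begins. Your diagnosis of why the cheap criteria fail (the self-inversive coefficient-dominance conditions of Lakatos--Losonczi type require the extreme coefficients to dominate, while the Bernoulli numbers inflate the middle ones) is accurate and is consistent with why the related families in \cite{LR}, \cite{LS}, and \cite{WZ} required bespoke arguments. So the honest conclusion is that you have a correct equivalent reformulation plus a program, not a proof --- which matches the status of the statement in the paper, where it remains open.
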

This new family of polynomials is closely related to the unimodular polynomials introduced in \cite{LR}, \cite{LS}, and \cite{WZ}.

\section{Special values of $\psi_{k}(z)$}\label{special values}

Throughout this section we assume that $k$ is a positive even integer.  Let $\SL_{2}(\mathbb{Z})$ denote the set of $2\times2$ integer-valued matrices with determinant equal to one, and let $\PSL_{2}(z)=\SL_{2}(\mathbb{Z})/\langle \id,-\id\rangle$.  Recall that $\PSL_2(\mathbb{Z})$ can be identified with the set of linear fractional transformations.  The usual group action is
\begin{equation*}
\left( \begin{array}{cc}a&b\\c&d\end{array}\right)z = \frac{az+b}{cz+d}.
\end{equation*}
It is easy to see that multiplying matrices is equivalent to performing compositions of linear fractional transformations.  Consider the following matrices in $\PSL_{2}(\mathbb{Z})$:
\begin{align*}
A=\left( \begin{array}{cc}1&2\\0&1\end{array}\right),&&B=\left( \begin{array}{cc}1&0\\2&1\end{array}\right).
\end{align*}
Equations \eqref{psi period} and \eqref{psi functional equation} are equivalent to
\begin{align}
\psi_{k}\left(A z\right)=&\psi_{k}(z),\label{matrix A}\\
%\psi_{2}\left(B z\right)=&\frac{1}{2z+1}\psi_2 \left(z\right)+\frac{z(3z^2 + 4z + 2)}{(2z+1)^2}\zeta(2).
\psi_{k}\left(B z\right)=&(2z+1)^{1-k} \psi_k(z)\label{matrix B}\\
&+\frac{(\pi  i)^k }{k!}\sum _{m=0}^k \left(2^{m-1}-1\right) B_m E_{k-m} {k\choose m}(z+1)^{k-m} \left[(2 z+1)^{m-k}-(2 z+1)^{1-k}\right].\notag
\end{align}
Every matrix $C\in \langle A,B\rangle$ has a factorization of the form $C=A^{j_1}B^{j_2}A^{j_3}\dots$, so equations \eqref{matrix A} and \eqref{matrix B} together imply that there exists a $z$-linear relation between $\psi_{k}(z)$, $\psi_k(C z)$, and $\pi^k$.  %We can derive some trivial formulas by setting %$z=2j$ in \eqref{matrix B}.  Since $\psi_{k}(2j)=\zeta(k)$ for $j\in\mathbb{Z}$, we obtain results like:
%\begin{equation}\label{psi 2 rational evaluations}
%\psi_2 \left(\frac{2j}{1+4j}\right) = \left(\frac{1 + 8 j + 16 j^2 + 24 j^3}{(1 + 4 j)^2}\right)\frac{\pi^2}{6}.
%\end{equation}
%We can also obtain more complicated formulas when $z$ is irrational.

Now we outline a strategy to obtain exact evaluations of $\psi_{k}(z)$.  First select a matrix $C$, and then find the linear relation between $\psi_{k}(z)$, $\psi_{k}(C z)$, and $\pi^k$.  Next choose $z$ so that $\psi_{k}(z)=\psi_{k}(C z)$.  For example, if $z=2j+\sqrt{2j(2j+1)}$ in \eqref{matrix B}, then $Bz=z-4j$ whenever $j$ is a non-zero integer. With some work the equation collapses to
\begin{equation}\label{gen 1}
\begin{split}
\psi_{k}\left(\sqrt{2j(2j+1)}\right)=\frac{(\pi i)^k}{k!}\sum_{m=0}^{k}&\left(2^{m-1}-1\right)B_{m}E_{k-m}{k\choose m}\\
&\times\left[\frac{\left(1+\sqrt{\frac{2j}{2j+1}}\right)^{m-1}-\left(1-\sqrt{\frac{2j}{2j+1}}\right)^{m-1}}
{\left(1+\sqrt{\frac{2j}{2j+1}}\right)^{k-1}-\left(1-\sqrt{\frac{2j}{2j+1}}\right)^{k-1}}\right],
\end{split}
\end{equation}
 which holds for $j\in\mathbb{Z}\setminus\{0\}$.  Similarly, if we take $z=|2j+1|+ \sqrt{2 j (2 j + 1)}$, then we arrive at
\begin{equation}\label{gen 2}
\begin{split}
\psi_{k}\left(1+\sqrt{2j(2j+1)}\right)=\frac{(\pi i)^k}{k!}\sum_{m=0}^{k}&\left(2^{m-1}-1\right)B_{m}E_{k-m}{k\choose m}\\
&\times\left[\frac{\left(1+\sqrt{\frac{2j+1}{2j}}\right)^{m-1}-\left(1-\sqrt{\frac{2j+1}{2j}}\right)^{m-1}}
{\left(1+\sqrt{\frac{2j+1}{2j}}\right)^{k-1}-\left(1-\sqrt{\frac{2j+1}{2j}}\right)^{k-1}}\right],
\end{split}
\end{equation}
which also holds for $j\in\mathbb{Z}\setminus\{0\}$.  The right-hand sides of \eqref{gen 1} and \eqref{gen 2} are invariant under the Galois action $\sqrt{x}\mapsto-\sqrt{x}$, so both formulas are rational with respect to $j$.  Specializing \eqref{gen 1} at $k=2$ and $k=4$ yields
\begin{align*}
\psi_{2}\left(\sqrt{2j(2j+ 1)}\right)=&(3j+1)\frac{\pi^2}{6},\\
\psi_{4}\left(\sqrt{2j(2j+1)}\right)=&\left( \frac{75j^2+ 46 j + 6}{8j+3}\right)\frac{\pi^4}{180}.
\end{align*}
These results support Conjecture \ref{conjecture on rationality}.  Further evidence for the conjecture is provided by combining \eqref{gen 1}, \eqref{gen 2}, and \eqref{psi semi period}, to obtain identities like
\begin{equation}\label{psi2 fixed case}
\psi_2\left(\sqrt{8j(2j+1)}\right)=\frac{\pi^2}{6},
\end{equation}
whenever $j\in \mathbb{Z}$.

In general, it seems to be quite difficult to evaluate $\psi_k(\sqrt{j})$ for arbitrary positive integers $j$.  This is due to the fact that \eqref{matrix A} and \eqref{matrix B} restrict the available matrices to a subgroup of $\PSL_2(\mathbb{Z})$.  If there exists a matrix $C\in\langle A,B\rangle$ which satisfies $C \sqrt{j}=\sqrt{j}$, then we can always evaluate $\psi_{k}(\sqrt{j})$.  We can construct candidate matrices by solving Pell's equation:
\begin{equation*}
X^2-j Y^2=1.
\end{equation*}
If we choose $C$ to be given by
\begin{equation*}
C=\left( \begin{array}{cc}X&j~Y\\Y&X\end{array}\right),
\end{equation*}
then it follows that $C \sqrt{j}=\frac{X\sqrt{j}+j Y}{Y\sqrt{j}+X}=\sqrt{j}$, and $\det(C)=X^2-j Y^2=1$.  Pell's equation has infinitely many integral solutions when $j\ge 1$ \cite{HW}, so there are infinitely many choices of $C$.  The main difficulty is to select appropriate values of $X$ and $Y$ so that $C$ factors into products of $A$'s and $B$'s.  It is not clear if such a selection is always possible.  Notice that $\langle A, B \rangle\subset\Gamma_{0}(2)$ and so it follows that $\PSL_2(\mathbb{Z})\not\subset\langle A, B \rangle$.

We conclude this subsection by noting that $\psi_{2}(z)=0$ for infinitely many irrational values of $z$.  Setting $n=-3j$ in Proposition \ref{proposition with many cases} below, yields
\begin{equation}
\psi_2\left(\sqrt{\frac{2(6j^2-1)}{3}}\right)=0,
\end{equation}
for any non-zero integer $j$.  It is worth emphasizing that $\psi_{k}(z)$ is highly discontinuous with respect to $z$, so these types of results are not surprising.
\begin{proposition}\label{proposition with many cases} Suppose that $j$ and $n$ are integers, and $n\ne 0$. Then
\begin{equation}\label{generalized irrational}
\psi_2 \left(\sqrt{\frac{2j(2j n+1)}{n}} \right)= \left(1+\frac{3j}{n}\right)\frac{\pi^2}{6}.
\end{equation}
\end{proposition}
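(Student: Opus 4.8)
The plan is to run the modular machinery of Section~\ref{special values} with the matrix $C = A^{j}B^{n}A^{j}$, which lies in $\langle A,B\rangle$ and fixes the point $z_0 := \sqrt{2j(2jn+1)/n}$. Since $A$ acts by $z\mapsto z+2$ and $B^{n}=\left(\begin{smallmatrix}1&0\\2n&1\end{smallmatrix}\right)$ acts by $z\mapsto z/(2nz+1)$, one checks that with $Z := A^{j}z_0 = z_0+2j$ the number $Z$ is a root of
\begin{equation*}
nZ^2 - 4jnZ - 2j = 0,
\end{equation*}
which is exactly the condition $B^{n}Z = Z-4j$; applying $A^{j}$ once more returns $Z-4j$ to $z_0$, so $Cz_0=z_0$. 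Because $\psi_2$ is invariant under $A$ and under even translations (equations~\eqref{matrix A} and~\eqref{psi period}), we have both $\psi_2(Cz_0)=\psi_2(z_0)$ and $\psi_2(Z)=\psi_2(z_0)$, so the whole problem reduces to understanding how $\psi_2$ transforms under $B^{n}$ at the quadratic irrational $Z$.

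First I would specialize the functional equation~\eqref{matrix B} to $k=2$. Using $B_0=1$, $B_2=\tfrac16$, $E_0=1$, $E_2=-1$, $E_1=0$, and $2^{0}-1=0$ (which kills the $m=1$ term), the sum collapses to the single clean relation
\begin{equation*}
\psi_2(Bz) = \frac{\psi_2(z)}{2z+1} + \frac{\pi^2 z(3z^2+4z+2)}{6(2z+1)^2}=:\frac{\psi_2(z)}{2z+1}+R(z).
\end{equation*}
To reach $B^{n}$ I would iterate this identity along the orbit $Z_m := B^{m}Z$, $m=0,\dots,n$, controlling the factors of automorphy with the cocycle identity $2Z_m+1 = (2(m+1)Z+1)/(2mZ+1)$. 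The key trick is to set $u_m := (2mZ+1)\,\psi_2(Z_m)$; the weighted recursion then telescopes into
\begin{equation*}
u_n - u_0 = \sum_{m=0}^{n-1}(2(m+1)Z+1)\,R(Z_m).
\end{equation*}
Writing $D_m := 2mZ+1$, each summand reduces by partial fractions to $\tfrac{\pi^2 Z}{6}\bigl[2+\tfrac{3Z}{2}(D_m^{-1}-D_{m+1}^{-1})\bigr]$, and the telescoping sum evaluates to $\tfrac{\pi^2 Zn}{3}+\tfrac{\pi^2 nZ^3}{2(2nZ+1)}$.

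Finally I would feed in the two invariances. Since $Z_n=Z-4j$ gives $u_n=(2nZ+1)\psi_2(Z)$ while $u_0=\psi_2(Z)$, the telescoped identity becomes $2nZ\,\psi_2(Z)=\tfrac{\pi^2 Zn}{3}+\tfrac{\pi^2 nZ^3}{2(2nZ+1)}$, that is,
\begin{equation*}
\psi_2(z_0)=\psi_2(Z)=\frac{\pi^2}{6}+\frac{\pi^2 Z^2}{4(2nZ+1)}.
\end{equation*}
The last step is purely algebraic: the defining relation $nZ^2=2j(2nZ+1)$ yields $Z^2/(2nZ+1)=2j/n$, which turns the right-hand side into $\tfrac{\pi^2}{6}\bigl(1+\tfrac{3j}{n}\bigr)$, as claimed. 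I expect the main obstacle to be the bookkeeping in the iterated transformation — organizing the $n$-fold application of~\eqref{matrix B} into the telescoping sum, and confirming that the orbit $\{Z_m\}$ stays in the algebraic-irrational regime where Lemma~\ref{lemma trick}, part~(3) of Theorem~\ref{convergence}, and hence the transformation are valid (sign and parity degeneracies being absorbed by the evenness~\eqref{psi even} of $\psi_2$). The genuinely clever ingredient is the choice $C=A^{j}B^{n}A^{j}$, which converts the Pell-type fixed-point condition into the elementary quadratic $nZ^2-4jnZ-2j=0$.
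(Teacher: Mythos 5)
Your proposal is correct and follows essentially the same route as the paper: the same matrix $C=A^{j}B^{n}A^{j}$, the same fixed point $z_0$, and the $k=2$ specialization of \eqref{matrix B} iterated $n$ times, then solved for $\psi_2(z_0)$. The only difference is cosmetic: you derive the $n$-fold iterate by hand via the explicit telescoping substitution $u_m=(2mZ+1)\psi_2(B^{m}Z)$, whereas the paper simply records the closed form \eqref{B^n action} and delegates that telescoping to a computer algebra system.
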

\begin{proof} Setting $k=2$ in \eqref{matrix B} yields
\begin{equation}\label{special case}
\psi_2(B z)=\frac{1}{2z+1}\psi_2(z)+\frac{z(3z^2 + 4z + 2)}{(2z+1)^2}\frac{\pi^2}{6}.
\end{equation}
%where
%\begin{equation*}
%T(z):=\frac{z(3z^2 + 4z + 2)}{(2z+1)^2}\zeta(2).
%\end{equation*}
Iterating \eqref{special case} gives
\begin{align}
\psi_{2}(B^n z)%=&\frac{1}{2n z+1}\psi_{2}(z)+\sum_{h=1}^{n}\frac{2h z+1}{2n z+1}T\left(B^{h-1}z\right)\notag\\
%=&\frac{1}{2n z+1}\psi_{2}(z)+\sum_{h=1}^{n}\frac{2h z+1}{2n z+1}T\left(\frac{z}{2(h-1)z+1}\right)\notag\\
=&\frac{1}{2n z+1}\psi_{2}(z)+\frac{n z \left(3z^2+4 n z+2\right)}{(2n z+1)^2}\frac{\pi^2}{6}.\label{B^n action}
\end{align}
The derivation of \eqref{B^n action} is best accomplished with the aid of a computer algebra system such as \texttt{Mathematica}, because significant telescoping occurs on the right.  Now consider the matrix
\begin{equation*}
C=A^j B^n A^j = \left(\begin{array}{cc}4j n+1&4j(2j n+1)\\2n&4j n+1\end{array} \right),
\end{equation*}
and notice that $C z_0 =z_0$, where
\begin{equation*}
z_0=\sqrt{\frac{2j(2j n+1)}{n}}.
\end{equation*}
Thus by \eqref{B^n action} we have
\begin{align*}
\psi_{2}(z_0)
=&\psi_{2}(C z_0)\\
=&\psi_{2}\left(B^n(z_0+2j)\right)\\
=&\frac{1}{2n (z_0+2j)+1}\psi_{2}(z_0)+\frac{n (z_0+2j) \left(3(z_0+2j)^2+4 n (z_0+2j)+2\right)}{(2n (z_0+2j)+1)^2}\frac{\pi^2}{6}.
\end{align*}
We complete the proof by solving for $\psi_{2}(z_0)$ and simplifying.
\end{proof}

\section{Speculations and Conclusion}
Assume that $k$ is a positive even integer.  Euler gave the following expression for Bernoulli numbers:
\begin{equation}\label{Bernoulli definition}
B_{k} = -2\frac{k!}{(2\pi i)^{k}} \zeta(k).
\end{equation}
Bernoulli numbers are interesting combinatorial objects, so it is natural to ask if the rational part of $\psi_{k}(\sqrt{j})$ also has interesting properties.  This is an obvious question because $\psi_{k}(0)=\zeta(k)$.  For instance, the von Staudt-Clausen theorem gives a complete description of the denominators of Bernoulli numbers:
\begin{equation}\label{VonStaudt}
B_{k} = \sum_{(p-1)|k} \frac{1}{p} + \text{ Integer}.
\end{equation}
Is there an analogue of \eqref{VonStaudt} for the rational part of $\psi_{k}(\sqrt{j})$?  As an example, consider $\psi_{k}(\sqrt{6})$ which can be calculated from equation \eqref{gen 1}.  In order to eliminate some trivial factors, we define
\begin{equation*}
\beta_k:=\frac{\left(3+\sqrt{6}\right)^{k-1}-\left(3-\sqrt{6}\right)^{k-1} }{\sqrt{6} }\frac{k!}{ (\pi  i)^k} \psi_k(\sqrt{6}).
\end{equation*}
By \eqref{gen 1} we have
\begin{equation*}
\beta_k= \sum _{m=0}^k \left(2^{m-1}-1\right)B_m E_{k-m} \binom{k}{m} 3^{k-m}\frac{\left(3+\sqrt{6}\right)^{m-1}-\left(3-\sqrt{6}\right)^{m-1} }{\sqrt{6}}.
\end{equation*}
The first few values are $\beta_2=-8/3$, $\beta_4=508/5$, and $\beta_6=-64896/7$, which all have denominators divisible only by primes where $p-1|k$ (as hoped for).  The first instance where this fails is $\beta_{20}$. The denominator of $\beta_{20}$ equals $5\cdot 7\cdot11$, but $7-1=6$ does not divide $20$.

\acknowledgements{The authors wish to thank Shabnam Ahktari for helpful discussions and Florian Luca for providing the proof of part (3) of Theorem \ref{convergence}. Finally, the authors are grateful to the referee for comments and suggestions that greatly improved the exposition of this article.}

\end{document}